\let\oldmarginpar\marginpar
\renewcommand\marginpar[1]{\-\oldmarginpar[\raggedleft\footnotesize #1]%
{\raggedright\footnotesize #1}}
\newtheorem{theorem}{Theorem}
\newtheorem{corollary}{Corollary}
\theoremstyle{definition}
\newtheorem{question}{Question}
\theoremstyle{remark}
\newcommand{\Z}{\mathbb{Z}}
\newcommand{\abs}[1]{| #1 |}
\newcommand{\Abs}[1]{\left| #1 \right|}
\newcommand{\Bigabs}[1]{\Big| #1 \Big|}
\newcommand{\Norm}[1]{\left\| #1 \right\|}
\newcommand{\Bignorm}[1]{\Big\| #1 \Big\|}
\newcommand{\inner}[2]{\left\langle #1|#2 \right\rangle}
\newcommand{\N}{\mathbb{N}}
\newcommand{\R}{\mathbb{R}}
\newcommand{\T}{\mathbb{T}}
\newcommand{\C}{\mathbb{C}}
\newcommand{\Q}{\mathbb{Q}}
\newcommand{\Hp}{\mathscr{H}}
\def\T{\mathbb{T}}
\def\N{\mathbb{N}}
\def\Z{\mathbb{Z}}
\def\Q{\mathbb{Q}}
\def\R{\mathbb{R}}
\def\C{\mathbb{C}}
\def\1{\mathbf{1}}
\newcommand{\dif}{\mathrm{d}}
\newcommand{\e}{\mathrm{e}}
\newcommand{\im}{\mathrm{i}}
\newcommand{\norm}[1]{\|#1\|}
\renewcommand{\Re}{\operatorname{Re}}
\newcommand{\beqno}{\begin{eqnarray*}}
\newcommand{\eeqno}{\end{eqnarray*}}
\newcommand{\beqla}[1] {\begin {eqnarray}\label{#1}}
\def\eeq {\end {eqnarray}}
\newcommand{\beq}{\begin {eqnarray}}
\newcommand{\real}{{\mathbb R}}
\newcommand{\complex}{{\mathbb C}}
\newcommand{\sgn}{{\rm sgn}\,}
\begin{document}

 \title[Fourier multipliers for Dirichlet series]{Fourier multipliers for Hardy spaces \\ of Dirichlet series}
 
\author{Alexandru Aleman, Jan-Fredrik Olsen \\ and {Eero Saksman}}
\address{Centre for Mathematical Sciences, Lund University, P.O. Box 118, SE-22100 Lund, Sweden}
\email{aleman@maths.lth.se}
\address{Department of Mathematics and Statistics, University of Helsinki, P.O. Box 68, FI-00014 Helsinki, Finland}
\email{eero.saksman@helsinki.fi}
\address{Centre for Mathematical Sciences, Lund University, P.O. Box 118, SE-22100 Lund, Sweden}
\email{janfreol@maths.lth.se}
\thanks{The third author was
supported by the Finnish CoE in Analysis and Dynamics Research,
and by the Academy of Finland, projects 
113826 \& 118765}

\begin{abstract}
	We obtain  new results on Fourier multipliers for Dirichlet-Hardy spaces. As a consequence, we establish a  Littlewood-Paley  type inequality which yields  a simple  proof that the Dirichlet monomials form a Schauder basis for $p>1$. 
\end{abstract}

\keywords{Dirichlet series, Hardy spaces, infinite polydisc, Schauder bases, Fourier multipliers}
\subjclass[2000]{Primary 30B50; Secondary 42B15, 42B30, 46B15}

\maketitle

\section{Introduction}
The Dirichlet-Hardy spaces $\Hp^p$ were first explicitly studied in the papers \cite{bayart2002paper, hls1997}. (We refer to these papers for full details of the discussion in this section. See also \cite{hls1999} for some historical remarks.) For $p=2$, they   consist of   Dirichlet series  $\sum_{n \in \N} a_n n^{-s}$ with square-summable coefficients, where $s = \sigma + \im t$ denotes the complex variable. By the Cauchy-Schwarz inequality, functions in $\Hp^2$ converge on the half-plane $\C_{1/2}   =  \{ \sigma > 1/2\}$. 
These spaces connect function space theory   to analytic number theory. A striking illustration of this connection is given by the Riemann-zeta function $\zeta(s) = \sum_{n \in \N} n^{-s}$ that   gives the reproducing kernel of $\Hp^2$. Indeed, the function $k_w(s) := \zeta(s + \bar{w})$, for $\Re w >1/2$, has the property that $\inner{f}{k_w} = f(w)$ for all $f \in \Hp^2$, as may be  verified by inspection. 

For general $p>0$, these spaces are defined to be the closure of Dirichlet polynomials in the norm
\begin{equation} \label{ergodic norm}
	  \lim_{T \rightarrow \infty}  \left( \frac{1}{2T} \int_{-T}^T \Bigabs{\sum_{n=1}^N a_n n^{- \im t}}^p \dif t \right)^{1/p}.
\end{equation}
This norm can be understood as the ergodic theorem on the infinite dimensional polydisk $\T^\infty$.
To briefly explain this, we note that $\T^\infty$ is a compact Abelian group with the product of the normalised Lebesgue measures $\dif \theta_i/2\pi$ on each copy of $\T$ as its unique normalised Haar measure $\dif \theta$. It has dual group $\Z^\infty_\text{fin}$, i.e., sequences in $\Z^\infty$ with finitely many non-zero coefficients. So by standard Fourier analysis on groups,   $F \in L^p(\T^\infty)$ has a Fourier expansion $F \sim \sum_{\nu \in \Z^\infty_\text{fin}}a_\nu z^\nu$, where $z \in \T^\infty$ and we use multi-index notation.
The central observation, essentially called Kroenecker's lemma, is that the path $\phi: t \mapsto (2^{-\im t}, \ldots, p_i^{- \im t},\ldots)$, where $p_i$ is the $i$'th prime number, is ergodic in $\T^\infty = \{ z = (z_1, \ldots) : z_i \in \T \}$. The ergodic theorem now says exactly that for continuous functions
\begin{equation}
	\lim_{T \rightarrow \infty} \left( \frac{1}{2T} \int_{-T}^T \abs{ F\circ \phi(t) }^p \dif t\right)^{1/p} = \norm{F}_{L^p(\T^\infty)}
\end{equation}
For $F$ with spectral support only in the narrow cone $\N^\infty_\text{fin}$, one checks that $F\circ \phi$ is a Dirichlet series and that the right-hand side of this formula is exactly \eqref{ergodic norm}, provided we identify $a_\nu$ with $a_n$ when $n = p_1^{\nu_1} p_2^{\nu_2} \cdots$. (Note that the same argument can be made using only the Stone-Weierstrass theorem, see \cite{saksman_seip2009}) We define the subspace $H^p(\T^\infty)$ to consist of exactly these functions. By the uniqueness of prime number factorization, the map from $H^p(\T^\infty)$ to $\Hp^p$ given by $F \mapsto F \circ \phi$ has an inverse, which is called the Bohr lift in honor of H. Bohr.

The structure of the paper is as follows.
In Section \ref{multiplier section}, we use a technique of Fefferman to study certain Fourier multipliers on the spaces $L^p(\T^\infty)$. These results are used in Section \ref{consequence section} to obtain a Littlewood-Paley inequality for the spaces $\Hp^p$:
	for $f = \sum_{n \in \N} a_n n^{-s}$   in $\Hp^p$ with $p> 1$ and $c>1$, we have
	\begin{equation} \label{intro ineq}
		\norm{f}_{\Hp^p} \simeq \abs{a_0} + \Norm{ \left(\sum_{k \geq 0} \abs{\sum_{\log n \in (c^k, c^{k+1})} a_n n^{-s}}^2\right)^{1/2} }_{\Hp^p}.
	\end{equation}
%
As an application,  we observe that the functions $\{n^{-s}\}_{n \in \N}$ constitute a Schauder basis for the spaces $\Hp^p$ for $p>1$.

\section{Fourier Multipliers} \label{multiplier section}
To   state and prove our theorem on Fourier multipliers, we first introduce some notation, and review some necessary background. Throughout the section, $p \geq 1$.

A measurable function $m : \R \rightarrow \C$ is called a Fourier multiplier on $L^p(\R)$ if the operator
$f \longmapsto \mathcal{F}^{-1}( m(\xi) \hat{f}(\xi))$ is bounded
 on $L^p(\R)$, where $\mathcal{F}$ denotes the Fourier transform. On the torus $\T$, a function $m : \Z \rightarrow \C$ is called a multiplier if the map
defined by the relation $\e^{\im n t} \mapsto m(n) \e^{\im n t}$
extends to a bounded operator on $L^p(\T)$.  Finally, a function $m : \Z^\infty_{\text{fin}} \rightarrow \C$ is called a multiplier if the operator
\begin{equation*}
	 \sum_{\nu \in \Z^\infty_{\text{fin}}} a_\nu \e^{\im \nu \cdot \theta} \longmapsto \sum_{\nu \in \Z^\infty_{\text{fin}}} m(\nu) a_\nu \e^{\im \nu \cdot \theta}
\end{equation*}
is bounded on $L^p(\T^\infty)$. Here we use the notation $z=\e^{\im \theta}$ for a point in $\T^\infty$.
We denote the respective operator norms by $\norm{m}_{M_p(X)}$, where $X= \R, \T$ or $\T^\infty$ as appropriate. We refer to the operator of multiplication by $m$ by $T_m$.

It is well-known that results for multipliers on $\T$ may be deduced from those on the real line by the method of transference. More specifically, let $m : \R \rightarrow \C$ be a regulated function, i.e.,
\begin{equation*}
	m(\xi ) = \lim_{\epsilon \rightarrow 0^+} \frac{1}{2\epsilon} \int_{-\epsilon}^\epsilon m(\xi + t) \dif t, \qquad \forall \xi \in \R. 
\end{equation*}
The basic result on transference, due to de Leeuw \cite{deleeuw1965} (see \cite[Section 3.6.2]{grafakos2008a} for proofs), states that if a regulated function $m$ is a multiplier on $\R$,
then $m$ restricted to $\Z$ is a multiplier on the torus. A converse statement also holds. In fact,
\begin{equation} \label{transference}
	\norm{m}_{M_p(\R)}  = \sup_{\gamma > 0} \norm{m(\gamma \cdot)}_{M_p(\T)}.
\end{equation}
Our argument relies in a crucial way on this formula.

To formulate our result, we introduce some additional notation. For $\nu \in \Z^\infty_{\text{fin}}$, one associates a unique rational number:
\begin{equation*}
	r : \nu \longmapsto r_\nu   = p_1^{\nu_1} \cdots p_k^{\nu_k},
\end{equation*}
where $p_i$ is the $i$'th prime number.
So, given a function $m : \Q_+ \rightarrow \C$, we obtain a function $m \circ r : \Z^\infty_{\text{fin}} \rightarrow \C$. In particular, $m$ induces in this way
a densely defined Fourier multiplier on $L^p(\T^\infty)$ by 
\begin{equation*}
	 T_{m \circ r} : \sum_{\nu \in \Z^\infty_{\text{fin}}} a_\nu \e^{\im \nu \cdot \theta} \longmapsto \sum_{\nu \in \Z^\infty_{\text{fin}}} m( r_\nu) a_\nu \e^{\im \nu \cdot \theta}
\end{equation*}

Our multiplier result is as follows:

\begin{theorem} \label{multiplier theorem}
	Let $p \in [1,\infty)$ and $m : \R_+ \rightarrow \C$ be a regulated function continuous at rational points. Then $m \circ r$ is a   Fourier multiplier on $L^p(\T^\infty)$, where $r(\nu) = p_1^{\nu_1}\cdots p_k^{\nu_k}$ for $\nu \in \Z^\infty_{\mathrm{fin}}$, if and only if $m \circ \exp$ is
	a   Fourier multiplier on $L^p(\R)$.
	Moreover, 
	\begin{equation*}
		\norm{m 	 \circ r}_{M_p(\T^\infty)} = \norm{m \circ \exp}_{M_p(\R)}.
	\end{equation*}
\end{theorem}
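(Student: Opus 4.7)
Write $n := m \circ \exp$ and $v := (\log p_1, \log p_2, \ldots)$, so that $(m \circ r)(\nu) = m(\e^{\nu \cdot v}) = n(\nu \cdot v)$ for every $\nu \in \Z^\infty_{\mathrm{fin}}$. The plan is to prove the two inequalities that together give the equality of norms (the ``if and only if'' is immediate once either inequality is established for both finite and infinite norm).

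\emph{Upper bound $\|m \circ r\|_{M_p(\T^\infty)} \leq \|n\|_{M_p(\R)}$.} Fix $d \geq 1$ and set $v_d = (\log p_1, \ldots, \log p_d)$. The function $\xi \mapsto n(\xi \cdot v_d)$ is a \emph{ridge multiplier} on $\R^d$ whose $M_p(\R^d)$-norm equals $\|n\|_{M_p(\R)}$: a linear change of variables aligning $v_d$ with a coordinate axis, followed by Fubini in the transverse directions, reduces to the one-dimensional multiplier $n$. Applying the $d$-dimensional de Leeuw transference (obtained by iterating \eqref{transference} in each coordinate) then gives $\|(m \circ r)|_{\Z^d}\|_{M_p(\T^d)} \leq \|n\|_{M_p(\R)}$. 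Trigonometric polynomials on $\T^\infty$ with spectrum contained in $\Z^d \times \{0\}^\infty$ depend only on the first $d$ variables, so by Fubini their $L^p(\T^\infty)$-norm equals their $L^p(\T^d)$-norm, and likewise after applying $T_{m \circ r}$. Since such polynomials (over all $d$) are dense in $L^p(\T^\infty)$, the inequality follows.

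\emph{Lower bound for $\gamma \in \log \Q_+$.} By \eqref{transference} applied to $n$ it suffices to bound $\|n(\gamma \cdot)\|_{M_p(\T)}$ for each $\gamma > 0$. For $\gamma = k_0 \cdot v$ with some $k_0 \in \Z^\infty_{\mathrm{fin}} \setminus \{0\}$ --- equivalently, $\gamma \in \log \Q_+ \setminus \{0\}$ --- associate to each trigonometric polynomial $Q(t) = \sum_{|k| \leq N} b_k \e^{\im k t}$ on $\T$ the trigonometric polynomial $\tilde Q(\theta) := Q(k_0 \cdot \theta)$ on $\T^\infty$, whose spectrum is $\{k k_0 : k \in \Z\}$. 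Since $k_0 \neq 0$, the map $\theta \mapsto k_0 \cdot \theta \pmod{2\pi}$ is a surjective group homomorphism $\T^\infty \to \T$ that pushes Haar measure to Haar measure (each $k_{0,i}\theta_i$ with $k_{0,i} \neq 0$ is uniform on $\T$, and independent sums of uniforms remain uniform), so $\|\tilde Q\|_{L^p(\T^\infty)} = \|Q\|_{L^p(\T)}$. A direct spectral computation, using $r_{k k_0} = (\e^{\gamma})^k = \e^{\gamma k}$, gives $(T_{m \circ r} \tilde Q)(\theta) = (T_{n(\gamma \cdot)} Q)(k_0 \cdot \theta)$, and the same push-forward identity yields $\|T_{m \circ r}\tilde Q\|_{L^p(\T^\infty)} = \|T_{n(\gamma \cdot)} Q\|_{L^p(\T)}$. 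Combining these we obtain $\|n(\gamma \cdot)\|_{M_p(\T)} \leq \|m \circ r\|_{M_p(\T^\infty)}$ for every $\gamma \in \log \Q_+$.

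\emph{Density extension --- the main obstacle.} The delicate step is to pass from $\gamma \in \log \Q_+$ to arbitrary $\gamma > 0$. Using an analogous self-embedding of $\T$ (with $k_0 = N$), one first establishes the monotonicity $\|n(N\gamma \cdot)\|_{M_p(\T)} \leq \|n(\gamma \cdot)\|_{M_p(\T)}$ for every $N \in \N$, which combined with \eqref{transference} and the regulated hypothesis should promote the supremum in \eqref{transference} to a Riemann-sum limit presenting $\|n\|_{M_p(\R)}$ as $\lim_{\gamma \to 0^+}\|n(\gamma \cdot)\|_{M_p(\T)}$. Since $\log \Q_+$ accumulates at $0$, one can approach the limit along $\gamma_j \in \log \Q_+$, $\gamma_j \to 0^+$; the continuity of $m$ at rational points is exactly what guarantees that the sampled Fourier symbols $n(\gamma_j k) = m(\e^{\gamma_j k})$ (all evaluated at rationals) pass appropriately to the limit. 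The main technical work of the proof will consist in executing this density/Riemann-sum step rigorously --- verifying that the two hypotheses ``regulated'' and ``continuous at rational points'' are together precisely what is needed to reconcile the values of the multiplier on the sparse logarithmic-rational lattice with the full continuous multiplier on $\R$.
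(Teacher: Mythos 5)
Your upper bound is correct and takes a genuinely different route from the paper: the ridge-multiplier identity on $\R^d$ plus the $d$-dimensional de Leeuw restriction theorem (legitimate here because $m$ is continuous at the rational lattice values $r_\nu$) gives $\norm{(m\circ r)|_{\Z^d}}_{M_p(\T^d)}\le\norm{m\circ\exp}_{M_p(\R)}$ directly, whereas the paper deliberately uses only the one-dimensional formula \eqref{transference}, approximating $\log p_j$ by fractions $a_j/Q$ and conjugating by a unimodular integer matrix so that only the one-variable multiplier $k\mapsto m(\e^{k/Q})$ is ever needed. (Your aside that the $d$-dimensional statement follows by ``iterating \eqref{transference} in each coordinate'' is a gloss --- the iteration needs a mixed $\T^j\times\R^{d-j}$ version of de Leeuw --- but the $\R^d\to\Z^d$ theorem is standard and citable, so this half is fine.) Your lower bound for $\gamma\in\log\Q_+$, via the measure-preserving character $\theta\mapsto k_0\cdot\theta$, is also correct and is slicker than the paper's argument for those particular $\gamma$.

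The genuine gap is exactly the step you defer: passing from $\gamma\in\log\Q_+$ to all $\gamma>0$, and your sketch for it does not go through. Monotonicity does give $\norm{m\circ\exp}_{M_p(\R)}=\limsup_{\gamma\to0^+}\norm{m\circ\exp(\gamma\cdot)}_{M_p(\T)}$, but nothing allows you to evaluate this limsup along $\log\Q_+$: you would need $\sup_{\gamma\in\log\Q_+}\norm{m\circ\exp(\gamma\cdot)}_{M_p(\T)}=\sup_{\gamma>0}\norm{m\circ\exp(\gamma\cdot)}_{M_p(\T)}$, i.e.\ some continuity of the torus multiplier norm in the dilation parameter, and that is unavailable: for irrational $\e^{\gamma}$ the sampled symbol values $m(\e^{\gamma k})$ sit at points where $m$ is merely regulated, so perturbing $\gamma$ to a nearby $\log q$ can change these finitely many values arbitrarily; moreover the Riemann-sum proof of the converse de Leeuw theorem exploits averaging over a continuum of dilations (this is precisely where ``regulated'' enters) and cannot be run along a prescribed countable set of $\gamma$'s. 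The paper never takes a limit in $\gamma$: it fixes $\gamma>0$ and a test polynomial with frequencies $\abs{n}\le N$, uses the prime number theorem ($\log(p_{n+1}/p_n)\to0$) to pick primes with $\abs{\gamma(b+1)-\log p_j}<\delta/N$ and $\abs{\gamma b-\log p_k}<\delta/N$, so that $\gamma$ is approximated by $\log(p_j/p_k)$, transfers by the unimodular matrix $B$ (your homomorphism with $k_0$ equal to the difference of the $j$-th and $k$-th unit vectors would serve the same purpose), and then absorbs the discrepancy between the finitely many values $m(\e^{\gamma n})$ and $m((p_j/p_k)^n)$ into an $\eps$ for that fixed polynomial, using the continuity hypothesis. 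In short, the approximation must be carried out at the level of a fixed test polynomial and finitely many symbol values, not at the level of multiplier norms; that is the idea your plan is missing, and without it the ``only if'' direction (and hence the norm equality) is not established.
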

\begin{proof}
	We split the proof of the theorem into two parts.

	First, we establish that $\norm{m  \circ  r}_{M_p(\T^\infty)} \leq \norm{m \circ \exp}_{M_p(\R)}$.  Fix a polynomial
	\begin{equation*}
		f = \sum_{\nu \in \Z^\infty_{\text{fin}}}  a_\nu \e^{\im \nu \cdot \theta}.
	\end{equation*}
	Observe that since a polynomial only depends on a finite number of variables,   we may restrict our attention to $L^p(\T^d)$, for some $d \in \N$. 
	As a multiplier on $L^p(\T^d)$, we need only consider $\nu \in \Z^d$. Explicitly, we only need to consider the multiplier
	\begin{equation*}
		\nu\mapsto m(r_\nu) = m\left( \e^{\nu_1 \log p_1 + \ldots + \nu_d \log p_d} \right),\quad \nu\in\Z^d,
	\end{equation*}
acting on $L^p(\T^d).$
	The idea is to introduce a change of variables on $\T^d$ so that as a multiplier, this function only acts on the first variable.
	
	To do this, we need to make an approximation. For $\delta >0$, choose $Q, a_1, \ldots, a_d \in \N$ so that
	\begin{equation*}
		\Abs{\frac{a_j}{Q}  -   \log p_j  }< \delta, \qquad \text{for} \; j = 1, \ldots, d. 
	\end{equation*}
	We may assume that $a_1$ and $a_2$ are relatively prime (indeed, by   the prime number theorem, we may choose both $a_1$ and $a_2$ to be prime), whence there exist $q_1, q_2 \in \N$ so that $a_1 q_2 - a_2 q_1  = 1$. 
	This ensures that the $d\times d$ matrix
	\begin{equation*}
		A = \left( \begin{matrix} a_1 & a_2 & a_3 & \cdots & a_d \\ q_1 & q_2  & 0  & \cdots & 0 \\ 0  &  0 & 1 & \cdots  & 0 \\ \vdots & \vdots & \vdots & \ddots & \vdots
		\\ 0 & 0 & 0 & \cdots & 1  \end{matrix} \right) 
	\end{equation*}
	satisfies $\det A = 1$. A fortiori,   $A^{-1}$ also has integer coefficients, whence $A:\Z^d\to\Z^d$ is bijective.  Especially, one checks that $A$ induces   a bijective and measure preserving diffeomorphism on $\T^d=\R^d/\Z^d$.
	
	We next introduce a function defined on  $\nu \in \Z^d$ by
	\begin{equation*}
		M(\nu)  = m\left(\e^{\frac{a_1}{Q} \nu_1 + \cdots + \frac{a_d}{Q} \nu_d} \right).
	\end{equation*}
	Since $m$ was assumed to be continuous on rational numbers, it follows that for any $\epsilon >0$ small enough, we may choose $\delta >0$ 
	sufficiently small in the above approximation, so that $\abs{M(\nu) - m(r_\nu)} < \epsilon$ uniformly on the finite index set corresponding to
	the set of non-zero coefficients of the polynomial $f$. In particular, this implies that we can make $\norm{T_M f  - T_{m \circ r} f}_{L^p(\T^d)}$ arbitrarily small.
	In light of \eqref{transference}, to obtain the desired inequality, we infer that it suffices to prove 
	\begin{equation} \label{sufficiency condition}
		\norm{T_M f}_{L^p(\T^d)}  \leq \norm{m \circ \exp(Q^{-1} \cdot)}_{M_p(\T)}.
	\end{equation}

	To verify  \eqref{sufficiency condition}, let us first employ the change of variables $\theta = A^T \theta'$ to get
	\begin{align*}
		\norm{T_M f }^p_{L^p(\T^d)} 
		= \int_{\T^d} \Bigabs{ \sum_{\nu \in \Z^d}  M(\nu) a_\nu \e^{\im \nu \cdot A^T \theta'}  }^p \dif \theta' 
		=\int_{\T^d} \Bigabs{ \sum_{\nu \in \Z^d} M(\nu)  a_\nu \e^{\im A\nu \cdot  \theta'}  }^p \dif \theta'.
	\end{align*}
	If we change indices by $\nu' = A \nu$, and observe that $M(\nu) = m(\e^{\nu'_1/Q})$, this becomes
	\begin{equation} \label{step to be modified}
		\int_{\T^d} \Bigabs{ \sum_{\nu' \in \Z^d}  m(\e^{\nu'_1/Q}) a_{A^{-1}\nu'} \e^{\im \nu' \cdot  \theta'}  }^p \dif \theta'
		=
		\int_{\T^{d-1}} \Bignorm{ \sum_{\nu'_1} b_{\nu'_1} m(\e^{\nu'_1/Q}) \e^{\im \nu'_1 \theta'_1} }_{L^p(d\theta'_1)}^p \frac{\dif \theta'_2}{2\pi} \cdots \frac{\dif \theta'_d}{2\pi},
	\end{equation}
 	where $b_{\nu'_1}=b_{\nu'_1}(\theta'_2,\ldots, \theta'_d) $ is constant with respect to $\theta'_1$. This is less than or equal to
	\begin{multline*}
		\norm{m\circ \exp (Q^{-1} \cdot)}_{M_p(\T)}^p \int_{\T^{d-1}} \Bignorm{ \sum_{\nu'_1} b_{\nu'_1} \e^{\im \nu'_1 \theta'_1} }_{L^p(d\theta'_1)}^p \frac{\dif \theta'_2}{2\pi} \cdots \frac{\dif \theta'_d}{2\pi} \\
		 = \norm{m\circ \exp (Q^{-1} \cdot)}_{M_p(\T)}^p  \norm{f}_{L^p(\T^d)}^p,
	\end{multline*}
	which exactly yields the desired inequality \eqref{sufficiency condition}.
 
	We turn to the second part of the proof, where we     establish the inequality $ \norm{m \circ \exp}_{M_p(\R)} \leq \norm{m\circ r}_{M_p(\T^\infty)}$.
	 By \eqref{transference}, it is sufficient to show that, for every $\gamma >0$, we have
	$\norm{m \circ \exp(\gamma \cdot)}_{M_p(\T)} \leq \norm{m \circ r}_{M_p(\T^\infty)}$.
	We now fix
	a polynomial in one variable. As   our idea is to work the previous argument backwards using only two variables, we express the polynomial as trivially depending on a second variable:
	\begin{equation*}
		f(\theta'_1, \theta'_2) =  \sum_{\abs{n} \leq N} a_{(n,0)} \e^{\im n \theta'_1}.
	\end{equation*}
	Here $a_{(n,m)}$ is zero for all $(n,m) \notin \N \times \{0\}$.

	As in the first part of the proof, we first fix $\delta >0$ and introduce a change of variables, this time induced by the matrix
	\begin{equation*}
		B = \left( \begin{matrix} b+1 & b \\ 1 & 1 \end{matrix} \right).
	\end{equation*}
Above, the integer  $b$ is chosen so large that there exist  prime numbers 
$p_j, p_{k}$ for which
	\begin{equation*}
		\abs{\gamma (b+1) - \log p_j} < \delta/N, \qquad \text{and} \qquad 		\abs{\gamma b - \log p_k} < \delta/N.
	\end{equation*}
	This is possible, since from the prime number theorem   it   holds that
$
 \log ({p_{n+1}}/{p_n}) \rightarrow 0$ when $n \rightarrow \infty.$
As we have $\det B = 1$, the matrix $B$ induces a measure preserving diffeomorphism of $\T^2$.
	
	Setting $\theta = B^T \theta'$ and $(n,0)^T = B \nu$, we get
	\begin{align*}
		 \norm{T_{m \circ \exp(\gamma \cdot)} f }_{L^p(\T)}^p
		 &=
		 \iint_{\T^2} \Bigabs{ \sum_{\abs{n} \leq N}m(\e^{\gamma n})  a_{(n, 0)}  \e^{\im (n,0) \cdot \theta'} }^p \frac{\dif \theta'_1 }{2\pi}\frac{\dif \theta'_2}{2\pi} \\
		 &=\iint_{\T^2} \Bigabs{ \sum_{\nu \in \Z^2}  m(\exp(\gamma((b+1)\nu_1+b\nu_2)))a_{B\nu} \e^{\im \nu \cdot \theta} }^p \frac{\dif \theta_1 }{2\pi}\frac{\dif \theta_2}{2\pi}.
	\end{align*}

	As $v=(n,-n)^{T}$, we are only summing over $\nu \in \Z^2$ for which $\abs{\nu} \leq 2N$. So given any $\epsilon>0$, by choosing $\delta>0$ small enough,	we make 
	\begin{equation*}
		\abs{ m\circ \exp(\gamma n) - m(p_j^{\nu_1} p_k^{\nu_2} )} = 
		\abs{ m (\e^{\gamma(b_1 \nu_1 + b_2 \nu_2)}) - m(\e^{\nu_1 \log p_j+ \nu_2 \log p_k}) } < \epsilon
	\end{equation*}
	uniformly for indices $\nu$ so that $a_{B\nu}$ is non-zero. This implies that we only need to establish that
	\begin{equation*}
		 \int_{\T^2} \Bigabs{ \sum_{\nu \in \Z^2} m(p_j^{\nu_1} p_k^{\nu_2}) a_{B \nu} \e^{\im \nu \cdot \theta}}^p \frac{\dif \theta_1}{2\pi} \frac{\dif \theta_2}{2\pi}
		 \leq
		 \norm{m \circ r}_{M_p(\T^\infty)}^p \norm{f}_{L^p(\T)}^p.
	\end{equation*}
	But this is readily seen to hold, as the left-hand side may be interpreted as $T_{m \circ r}$ applied to a function $F$ depending on the $j$'th and $k$'th copy of $\T$ in $\T^\infty$, and where $\norm{F}_{L^p(\T^\infty)} = \norm{f}_{L^p(\T)}$ holds by reversing the changes in notation and variables.
	\end{proof}

\section{Some Consequences and open problems} \label{consequence section}

In this section we deduce a Littlewood-Paley inequality from Theorem \ref{multiplier theorem}, and also discuss  Schauder bases for the spaces $\Hp^p$.

First, we observe how a characterisation due to Marcinkiewicz 
is inherited by multipliers of the form discussed in the previous section. To do this, we recall that
the total variation of a complex function $f$ on the interval $(a,b)$ is given by 
\begin{equation*}
	\norm{f}_{\text{BV}(a,b)} = \sup \sum_{n=1}^N \abs{f(x_j) - f(x_{j-1})},
\end{equation*}
where the supremum is taken over all sequences $a = x_0 < x_1 < \cdots < x_n = b$. For fixed $\eta >1$, we also use the notation
\begin{equation*}
	I_k =  \left\{ \begin{aligned} \hspace{0.1 cm}[\e^{\eta^k}, \e^{\eta^{k+1}}] && k &\geq 1. \\  [\e^{-\eta}, \e^\eta] && k &= 0, \\ [\e^{-\eta^{\abs{k}+1}}, \e^{-\eta^{\abs{k}}}]  && k &\leq -1. \end{aligned} \right. 
\end{equation*}
We now get:
\begin{corollary} \label{marcinkiewicz corollary}
	Suppose that $p \in (1,\infty)$ and $\eta>1$, then there exists a constant $C>0$ such that  for all regulated $m : \R_+ \rightarrow \C$ that are continuous at rationals we have
	\begin{equation*}
		\norm{m \circ r}_{M_p(\T^\infty)} \leq C \left( \norm{m}_{L^\infty(0,\infty)} + \sup_{k \in \Z} \norm{m}_{\text{BV}(I_k)} \right).
	\end{equation*}
\end{corollary}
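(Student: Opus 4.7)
The plan is to reduce to the classical Marcinkiewicz multiplier theorem on the line via Theorem \ref{multiplier theorem}. Setting $n(\xi):=m(e^\xi)$ for $\xi\in\R$, Theorem \ref{multiplier theorem} gives $\norm{m\circ r}_{M_p(\T^\infty)}=\norm{n}_{M_p(\R)}$, so it suffices to bound $\norm{n}_{M_p(\R)}$. For $1<p<\infty$, Marcinkiewicz's theorem furnishes a constant $C_p$ with
\[
\norm{n}_{M_p(\R)}\leq C_p\Big(\norm{n}_\infty+\sup_{\ell\in\Z}\norm{n}_{\mathrm{BV}(J_\ell)}\Big),
\]
where $J_\ell$ runs over the dyadic intervals $[2^\ell,2^{\ell+1}]$ and $[-2^{\ell+1},-2^\ell]$.

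I would verify the two hypotheses separately. The $L^\infty$ estimate is immediate, since $\norm{n}_\infty=\norm{m}_{L^\infty(0,\infty)}$. For the variation estimate, I would use that total variation is preserved by the strictly monotone substitution $y=e^\xi$, so $\norm{n}_{\mathrm{BV}([a,b])}=\norm{m}_{\mathrm{BV}([e^a,e^b])}$; hence the task reduces to bounding $\norm{m}_{\mathrm{BV}([e^{2^\ell},e^{2^{\ell+1}}])}$, and its negative-$\ell$ analogue, by a constant times $\sup_k\norm{m}_{\mathrm{BV}(I_k)}$.

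This last estimate is where the main work lies, and is essentially a combinatorial comparison between the dyadic family $\{J_\ell\}$ and the exponentially spaced family $\{I_k\}$, which tiles $(0,\infty)$. For small $|\ell|$ (namely $2^{\ell+1}\leq\eta$) the image interval sits inside $I_0$, so its BV is bounded by $\norm{m}_{\mathrm{BV}(I_0)}$. For large $|\ell|$, say $2^\ell\geq\eta$, the image interval $[e^{2^\ell},e^{2^{\ell+1}}]$ meets at most $N_\eta:=\lceil\log_\eta 2\rceil+2$ consecutive $I_k$'s, so additivity of BV over the tiling yields the bound $N_\eta\sup_k\norm{m}_{\mathrm{BV}(I_k)}$; the negative side is symmetric. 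Combining these ingredients yields the corollary with $C$ depending only on $p$ and $\eta$. The only place where care is needed is in this interval comparison; beyond Theorem \ref{multiplier theorem} and Marcinkiewicz, no further analytic input is required.
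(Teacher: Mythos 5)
Your proposal is correct and follows essentially the same route as the paper: reduce via Theorem \ref{multiplier theorem} to the multiplier $m\circ\exp$ on $\R$, note that the sup-norm and the variations transfer under the substitution $y=\e^{\xi}$, and invoke the classical Marcinkiewicz theorem. The only difference is that you spell out the comparison between the dyadic intervals of the classical theorem and the $\eta$-adic family coming from the $I_k$ (a bounded-overlap counting argument), a step the paper treats as immediate.
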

\begin{proof}
	Since $m \circ \exp$ and $m$ have the same sup-norm, and $\norm{m \circ \exp}_{\text{BV}(\eta^k,\eta^{k+1})} = \norm{m}_{\text{BV}(I_k)}$, the Marcinkiewicz bound follows  immediately from its classical counter-part, see \cite[Section 5.2.1]{grafakos2008a}.
\end{proof}

We also formulate a H\" ormander-Mihlin type multiplier theorem for $p=1$, see \cite{mihlin1956, hormander1960} (a proof is also found in \cite[Theorem 5.2.7]{grafakos2008a}).
Recall that  $m:\R\to\C$ satisfies the H\" ormander-Mihlin condition if $m$ is continuous and piecewise differentiable on $\real\setminus\{ 0\}$ with
\begin{equation}\label{eq:hm}
\| m\|_{L^\infty(\R)} +\sup_{x\not=0}|xf'(x)| <\infty.
\end{equation}
If this holds, then $m\in M^p(\R)$ for any $p\in (1,\infty )$. In addition, $m$ defines a multiplier operator that is bounded from $H^1(\R)$ to $ L^1(\real)$ with
 norm  bounded by (\ref{eq:hm}). For our purposes it is useful to observe that (\ref{eq:hm}) remains invariant if  $m$
is replaced by $m(\lambda\cdot)$ for any $\lambda >0.$
\begin{corollary}\label{th:2.2} Assume that $m:(0,\infty)\to\complex$ is continuous and piecewise differentiable. Then
\begin{equation*}
\| m \circ r\|_{H^1 (\T^\infty)\to L^1 (T^\infty)}\leq c\Big(
\| m\|_{L^\infty (0,\infty )} + \sup_{t>\e}| t\log (t) m'(t)|
\Big).
\end{equation*}
\end{corollary}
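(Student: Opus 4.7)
The plan is to mimic the first half of the proof of Theorem~\ref{multiplier theorem} at the level of $H^1\to L^1$ bounds, and then combine this with the classical H\"ormander-Mihlin theorem on $\R$ applied to $m\circ\exp$. A preliminary modification of $m$ is needed, because the hypothesis only controls $t\log(t)m'(t)$ for $t>\e$, whereas the H\"ormander-Mihlin condition (\ref{eq:hm}) requires this estimate globally away from the origin of $\R$.

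First, since $r_\nu\in\N$ whenever $\nu\in\N^\infty_{\text{fin}}$, the operator $T_{m\circ r}$ acting on $H^1(\T^\infty)$ only depends on the values $m(n)$, $n\in\N$. The only integers in $(0,\e]$ are $n=1,2$, so I may replace $m$ on $(0,\e]$ by a function $\widetilde m$ that agrees with $m$ at $1$, $2$, and $\e$, is piecewise linear in between, and equals the constant $m(1)$ on $(0,1]$. This does not alter the operator, and a direct computation yields
$$
\|\widetilde m\|_\infty + \sup_{t\neq 1}|t\log(t)\widetilde m'(t)| \leq c\Big(\|m\|_{L^\infty(0,\infty)} + \sup_{t>\e}|t\log(t)m'(t)|\Big).
$$
By the chain rule $(g\circ\exp)'(\xi)=\e^\xi g'(\e^\xi)$, the right-hand side is comparable to the H\"ormander-Mihlin constant (\ref{eq:hm}) of $\widetilde m\circ\exp$ on $\R$.

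Next, I would repeat the first half of the proof of Theorem~\ref{multiplier theorem}. Fix a polynomial $f\in H^1(\T^\infty)$, reduce to $\T^d$, choose rational approximants $a_j/Q$ of $\log p_j$, and apply the change of variables $\theta=A^T\theta'$, $\nu'=A\nu$. The new crucial observation is that the first row of $A$ consists of the \emph{positive} integers $a_1,\ldots,a_d$, so for $\nu\in\N^d$ one has $\nu'_1=a_1\nu_1+\cdots+a_d\nu_d\geq 0$. Consequently the slice $\theta'_1\mapsto\sum_{\nu'_1}b_{\nu'_1}(\theta'_2,\ldots,\theta'_d)\e^{\im\nu'_1\theta'_1}$ belongs to $H^1(\T)$ for a.e.\ $(\theta'_2,\ldots,\theta'_d)$. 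Replacing the $M_p(\T)$-bound used there by the $H^1(\T)\to L^1(\T)$ multiplier norm and integrating over $\T^{d-1}$ via Fubini gives
$$
\|T_{\widetilde m\circ r}f\|_{L^1(\T^\infty)} \leq \|\widetilde m\circ\exp(Q^{-1}\cdot)\|_{H^1(\T)\to L^1(\T)}\,\|f\|_{L^1(\T^\infty)}.
$$

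Finally, the classical H\"ormander-Mihlin theorem gives $\|\widetilde m\circ\exp\|_{H^1(\R)\to L^1(\R)}\leq C(\|m\|_{L^\infty}+\sup_{t>\e}|t\log(t)m'(t)|)$, and the dilation invariance of (\ref{eq:hm}) together with de Leeuw type transference in the $H^1\to L^1$ setting yields the same bound for $\widetilde m\circ\exp(Q^{-1}\cdot)$ on $\T$, uniformly in $Q$. Combining with the previous step completes the proof. The main obstacle is ensuring that the change of variables $\nu\mapsto A\nu$ preserves analyticity in the first coordinate; this hinges precisely on the positivity of the entries $a_j$ of the first row of $A$, which is the technical heart of the argument.
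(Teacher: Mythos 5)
Your proposal is correct and follows essentially the same route as the paper's proof: modify $m$ on $(0,\e)$ so that $m\circ\exp$ satisfies the H\"ormander--Mihlin condition \eqref{eq:hm} (you make explicit, via the values at the integers $1,2$, why this modification is harmless for the operator on $H^1(\T^\infty)$, which the paper leaves implicit), then rerun the $p=1$ case of the proof of Theorem~\ref{multiplier theorem}, using that for $f\in H^1(\T^\infty)$ the coefficients with $\nu_1'<0$ vanish after the change of variables $\nu'=A\nu$, so the one-dimensional multiplier $m(\e^{\nu_1'/Q})$ acts only on analytic slices and the $H^1(\T)\to L^1(\T)$ bound, uniform in $Q$ by dilation invariance of \eqref{eq:hm}, applies. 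This is precisely the paper's argument, so nothing further is needed.
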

\begin{proof} 
The condition of $m$ ensures that it can be modified on $(0,e)$  so that it satisfies (\ref{eq:hm}) on $\real .$ Hence $T_{m \circ \exp}:H^1 \to L^1$   is bounded in one variable with the stated bound. The case $p=1$ of the  proof of Theorem \ref{multiplier theorem} now applies without changes. One simply needs to observe that
after the change of variables, the assumption $f\in H^1(\T^\infty)$ implies that $a_{A^{-1}\nu'}=0 $ if $\nu_1<0,$ whence the one-dimensional multiplier 
$m(\e^{\nu'_1/Q})$ is applied only to analytic functions.
\end{proof}

We proceed to obtain a Paley-Littlewood type of theorem for $L^p(\T^\infty)$ as a consequence of
Corollary \ref{marcinkiewicz corollary}.  Fix a rational number $\eta>1$, and consider intervals $I_k$ as above. For $f = \sum_{\nu \in \Z^\infty_{\text{fin}}} a_\nu \e^{\im \nu \cdot \theta}$ in $L^p(\T^\infty)$,  define
the square function
\begin{equation*}
	S(f) = \left( \sum_{k} \Bigabs{f_k(\theta)}^{2}  \right)^{1/2},
\end{equation*}
where
\begin{equation*}
	f_k(\theta) = \sum_{\nu : \; r_\nu\in I_k} a_\nu \e^{\im \nu \cdot  \theta}.
\end{equation*}
The following result is clearly the most interesting in the special case of $\Hp^p$, which we stated as formula \eqref{intro ineq} in the introduction.
\begin{corollary} \label{paley-littlewood corollary}
	Suppose that $p \in (1,\infty)$, and that $\eta>1$ is a rational number. Then there exist constants such that for all $f  \in L^p(\T^\infty)$, we have
	\begin{equation*}
		\norm{f}_{L^p(\T^\infty)} \simeq \norm{S(f)}_{L^p(\T^\infty)}.
	\end{equation*}
\end{corollary}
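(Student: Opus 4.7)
The plan is to follow the classical two-step route to a Littlewood-Paley inequality: randomize the square function via Rademacher functions to obtain the upper bound $\|S(f)\|_p \lesssim \|f\|_p$, and then use duality to obtain the reverse inequality. Both steps rest on applying Corollary \ref{marcinkiewicz corollary} to a suitable family of piecewise-constant multiplier symbols.

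The first step is to introduce, for every sign sequence $\epsilon = (\epsilon_k)_{k \in \Z} \in \{\pm 1\}^{\Z}$, the multiplier symbol $m_\epsilon : \R_+ \to \R$ defined by $m_\epsilon(x) = \epsilon_k$ on the interior of $I_k$ (and, say, by half-sums at the endpoints to make $m_\epsilon$ regulated). Since $\eta > 1$ is rational, Lindemann-Weierstrass implies that every endpoint $e^{\pm \eta^{|k|}}$ is transcendental; in particular $m_\epsilon$ is continuous at every rational point. Trivially $\|m_\epsilon\|_{L^\infty} = 1$ and $\|m_\epsilon\|_{\text{BV}(I_k)} = 0$ for every $k$, so Corollary \ref{marcinkiewicz corollary} delivers a uniform bound $\|T_{m_\epsilon \circ r}\|_{L^p(\T^\infty) \to L^p(\T^\infty)} \leq C_p$, independent of $\epsilon$.

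For the upper bound I would fix a trigonometric polynomial $f$ (so only finitely many blocks $f_k$ are nonzero) and let $\{r_k(t)\}_{k \in \Z}$ be independent Rademacher variables on $(0,1)$. Applying Khintchine's inequality pointwise in $\theta$ and then Fubini gives
\[
\|S(f)\|_p^p \simeq \int_0^1 \Bignorm{\sum_k r_k(t) f_k}_{L^p(\T^\infty)}^p dt = \int_0^1 \|T_{m_{\epsilon(t)} \circ r} f\|_p^p \, dt \leq C_p^p \|f\|_p^p,
\]
where $\epsilon(t) = (r_k(t))_k$, and density of polynomials extends the estimate to $L^p(\T^\infty)$. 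For the reverse inequality $\|f\|_p \lesssim \|S(f)\|_p$ I would use duality: pick a polynomial $g$ with $\|g\|_{p'} \leq 1$ and $|\int f \bar g| \geq \|f\|_p/2$. Since $r_\nu \in \Q_+$ whereas every endpoint $e^{\pm \eta^{|k|}}$ is transcendental, the index sets $\{\nu : r_\nu \in I_k\}$ are pairwise disjoint, so by Parseval $\int f \bar g = \sum_k \int f_k \bar g_k$, and a pointwise Cauchy-Schwarz in $k$ followed by H\"older in $\theta$ yields
\[
\Abs{\int f \bar g} \leq \int S(f) S(g) \, d\theta \leq \|S(f)\|_p \|S(g)\|_{p'} \lesssim \|S(f)\|_p,
\]
where the upper bound is applied with exponent $p'$.

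The only non-routine ingredient is verifying that the randomized symbols $m_\epsilon$ are continuous at every rational, and this is precisely where the rationality of $\eta$ is essential: it guarantees via Lindemann-Weierstrass that the interval endpoints are transcendental, hence disjoint from $\Q_+$ and irrelevant both to the hypotheses of Corollary \ref{marcinkiewicz corollary} and to the orthogonality argument used in the duality step. Everything else is standard randomization and duality bookkeeping.
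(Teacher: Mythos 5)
Your proof is correct, and it follows the same randomization-via-Rademacher-signs skeleton as the paper, with the same multiplier symbols $m_\epsilon = \sum_k \epsilon_k \chi_{I_k}$ and the same appeal to Corollary~\ref{marcinkiewicz corollary} to get the uniform bound. The difference is in how you close the argument. The paper exploits the algebraic identity $T_{m_\epsilon \circ r} \circ T_{m_\epsilon \circ r} = \mathrm{Id}$ (since $m_\epsilon^2 \equiv 1$ away from the negligible endpoint set): combined with the uniform operator bound $C$, this gives the two-sided estimate $C^{-1}\|g\|_p \le \|T_{m_\epsilon \circ r} g\|_p \le C\|g\|_p$ uniformly in $\epsilon$, and then a single averaging-plus-Khintchine step produces both directions of the Littlewood--Paley equivalence at once. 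You instead get only the upper bound $\|S(f)\|_p \lesssim \|f\|_p$ from randomization, and derive the lower bound by the classical dual pairing $\int f \bar g = \sum_k \int f_k \bar g_k$, pointwise Cauchy--Schwarz, H\"older, and the upper bound at the conjugate exponent $p'$. Both routes are standard and both work here; the paper's idempotence observation buys a slightly shorter proof, while your duality argument is the more broadly applicable template (it does not need the multiplier to be an involution). One small remark: you invoke Lindemann--Weierstrass to make the interval endpoints $e^{\pm\eta^{|k|}}$ transcendental, which is fine, but all that is used is irrationality (so that no $r_\nu \in \Q_+$ lands on an endpoint), which is what the paper states.
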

\begin{proof}
	We apply a standard argument. Define $m_{\epsilon} = \sum_{k \in \Z} \epsilon_k \chi_{I_k}$ for given $\epsilon \in \{-1,1\}^\Z$. By Corollary  \ref{marcinkiewicz corollary}, there exists some $C>0$, independent of $\epsilon$, such that   $\norm{m_\epsilon \circ r}_{M_p(\T^\infty)} \leq C$. Here, $m_\epsilon$ is 
	made  regulated by defining it appropriately on the endpoints of the intervals $I_k$. This has no effect on the operator $T_{m_\epsilon \circ r}$ as
	the endpoints are irrational. Next, since $T_{m_\epsilon \circ r}T_{m_\epsilon \circ r} = \text{Id}$, we obtain for any $g \in L^p(\T^\infty)$ that
$\norm{T_{m_\epsilon \circ r} g}_{L^p(\T^\infty)} \simeq \norm{g}_{L^p(\T^\infty)}$. This holds uniformly in $\epsilon$. The corollary now follows by averaging over $\epsilon$ and invoking Khintchine's inequality \cite[p. 435]{grafakos2008a}.
\end{proof}

This result should be compared to a Paley-Littlewood inequality  obtained from martingale theory. Indeed,   a function $f \in L^p(\T^\infty)$ may be considered as a martingale $\{f_{(N)}\}$ with respect to the filtration induced by the increasing sequence of $\sigma$-algebras corresponding to the sequence $\{ \T^N \}_{N\in \N}$. The function $f_{(N)}$, also called the conditional expectation, is obtained from $f$ by integrating away all but the $N$ first variables (see, e.g., \cite{hls1997} where these are called the `$N$:te Abschnitt'). A Paley-Littlewood inequality is  now obtained as  a direct corollary of the classical Burkholder's square function inequality \cite{burkholder1966} (see also
\cite[Theorem 5.4.7]{grafakos2008a}).  Set $\Delta_N f = f_{N} - f_{N-1}$. Then
\begin{equation} \label{martingale}
	\norm{f}_{L^p(\T^\infty)} \simeq \norm{(\sum \abs{\Delta_N f}^2)^{1/2}}_{L^p(\T^\infty)}.
\end{equation}
Actually, the same argument that was used to prove Corollary \ref{paley-littlewood corollary} yields \eqref{martingale} without using probability theory (this observation was applied in  \cite{asmar_newberger_saleem2006}).

In the following corollary, we consider the functions $1, 2^{-s},3^{-s},\ldots$. It is clear that they form an orthogonal basis in $\Hp^2$. Luckily, they also yield a natural basis in $\Hp^p$:
\begin{corollary}
	Suppose $p \in (1,\infty)$. Then the functions $n^{-s}$, $n=1,2,\ldots $, form a Schauder basis for $\Hp^p$.
\end{corollary}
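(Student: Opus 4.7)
The plan is to verify the standard Banach-space criterion: a linearly independent sequence whose finite linear combinations are dense in a Banach space constitutes a Schauder basis if and only if the associated truncation projections are uniformly bounded in operator norm. Linear independence of $\{n^{-s}\}_{n \in \N}$ follows from the uniqueness of the coefficients of Dirichlet series (equivalently, from unique prime factorisation under the Bohr lift), and the density of Dirichlet polynomials in $\Hp^p$ is built into the definition of the space. Consequently the whole matter reduces to establishing
\[
    \sup_{N \in \N} \norm{P_N}_{\Hp^p \to \Hp^p} < \infty,
    \qquad
    P_N : \sum_{n \in \N} a_n n^{-s} \longmapsto \sum_{n=1}^{N} a_n n^{-s}.
\]

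Via the Bohr lift, which is an isometry of $\Hp^p$ onto the closed subspace $H^p(\T^\infty) \subset L^p(\T^\infty)$, the operator $P_N$ is conjugate to the restriction to $H^p(\T^\infty)$ of a Fourier multiplier $T_{m_N \circ r}$, provided $m_N : \R_+ \to \C$ satisfies $m_N(k) = 1$ for integers $k \in \{1, \ldots, N\}$ and $m_N(k) = 0$ for integers $k \geq N+1$. I would take $m_N = \chi_{(0, c_N)}$ with the cutoff $c_N \in (N, N+1)$ chosen \emph{irrational}, say $c_N = N + 1/\sqrt{2}$, so that $m_N$ is continuous at every rational point, as demanded by Corollary \ref{marcinkiewicz corollary}. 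Setting $m_N(c_N) = 1/2$ makes $m_N$ regulated; modification at a single point does not affect the induced operator.

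It remains to feed this symbol into Corollary \ref{marcinkiewicz corollary}. Plainly $\norm{m_N}_{L^\infty(0,\infty)} = 1$, and since $m_N$ is a step function with a single jump of amplitude $1$ located at the irrational point $c_N$, its total variation on any of the dyadic-type intervals $I_k$ is at most $1$, uniformly in $N$ and $k$. Hence
\[
    \sup_{N \in \N} \norm{P_N}_{\Hp^p \to \Hp^p}
    \;\leq\;
    \sup_{N \in \N} \norm{m_N \circ r}_{M_p(\T^\infty)}
    \;\leq\;
    2C.
\]
Combined with the identity $P_N g = g$ for every Dirichlet polynomial $g$ of length at most $N$, this uniform bound yields $P_N f \to f$ in $\Hp^p$ for every $f \in \Hp^p$ by a standard three-epsilon argument, and uniqueness of the basis expansion follows from linear independence.

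The only genuine subtlety is the sleight of hand at the cutoff: a lazy choice such as $c_N = N + 1/2$ would violate the continuity-at-rationals hypothesis of Theorem \ref{multiplier theorem} and Corollary \ref{marcinkiewicz corollary}. With $c_N$ irrational everything else is routine, and I anticipate no further obstacle.
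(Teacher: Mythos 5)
Your argument is correct and follows essentially the same route as the paper: reduce the Schauder-basis property to uniform boundedness of the truncation projections, realise each $P_N$ as a Fourier multiplier with symbol the indicator of an interval $(0, c_N)$ with $c_N$ irrational to satisfy the continuity-at-rationals and regulated hypotheses, and invoke Corollary \ref{marcinkiewicz corollary} for the uniform bound. The only (immaterial) difference is that the paper fixes a single irrational $\alpha \in (0,1/2)$ and uses cutoffs $N+\alpha$, while you take $c_N = N + 1/\sqrt{2}$; both choices land strictly between $N$ and $N+1$, keep $m_N$ a single unit jump so that the BV and sup norms are uniformly controlled, and avoid rational discontinuities.
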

\begin{proof}
	By the density and independence of these functions, and standard Schauder basis theory, it suffices to establish that the truncations $\sum_{n=1}^\infty  a_n n^{-s}\mapsto \sum_{n=1}^N a_n n^{-s} $ are bounded on $\Hp^p$, uniformly
with respect to $N$.
	Let $\alpha \in (0, 1/2)$ be an irrational number. According to Corollary \ref{marcinkiewicz corollary}, the indicator functions of the intervals $(0, N +  \alpha)$
	yield uniformly bounded multipliers on $L^p(\T^\infty)$. The result  follows.
\end{proof}
Although we have not been able to find this result stated explicitly in the literature, we indicate how it can be deduced from \cite[Theorem 8.7.2]{rudin1962}.
This  result deals with the space $L^p(G)$, where $G$ is a compact abelian group that has a dual $\Gamma$ which admits an order relation $P$. I.e., $P$ is a subset of $\Gamma$ such that 
\begin{equation*}
	 P \cup (-P)  = \Gamma, \qquad \text{and} \qquad P \cap (-P) = \{ 0\}.
\end{equation*}
 Under any such order relation one can define $\sgn(\gamma) \in \{ -1, 0, 1\}$ according to whether or not $\gamma$ is in $P$ or is in $\{0\}$. With this, the statement is that  the Hilbert transform
\begin{equation*}
T_P:	\sum_{\gamma \in \Gamma} a_\gamma e_\gamma \longmapsto - \im \sum_{\gamma \in \Gamma} \sgn(\gamma) e_\gamma
\end{equation*}
is bounded on $L^p(G)$, where $e_\gamma$ is the Fourier character corresponding to $\gamma \in \Gamma$. In particular, 
$P = \{ \nu: \log r_\nu  \leq 0 \}$
is an order relation in the dual $\Z^\infty_\text{fin}$ of $\T^\infty$. Hence the  corresponding Riesz projection $R_P$, where $R_Pe_\gamma:= \chi_{\{r_\gamma \geq 0\}}e_\gamma$,  is bounded on $L^p(\T^\infty )$.  If $r\to \nu(r)$ is the inverse of the map $r$, we obtain   uniformly in $N$
\begin{equation*}
	\Bignorm{ \sum_{ n \leq  N} a_n n^{-s}}_{\Hp^p} 
=\Bignorm{e_{\nu (N)}R_P(e_{-\nu (N)}f)}
\end{equation*}
for functions $f(s)= \sum_{ n \in\N} a_n n^{-s}$ in $\Hp^p$. As above, it follows immediately that $\{ n^{-s} \}$ is a Schauder basis for $\Hp^p$ when $p>1$.

We end with the following open questions which may seem innocent, but they could be somewhat hard taking into account the quite intractable and mysterious nature of the spaces $\Hp^p$ for $p\not=2$ as discussed, e.g., in \cite{saksman_seip2009}.

\begin{question}
Does $\Hp^1$ have a Schauder basis? 
\end{question}
\begin{question}
Does $\Hp^p$ have an unconditional basis if $p\in (1,\infty )\setminus \{ 2\}$?
\end{question}

\section*{Acknowledgements}
This work was done as part of the research program ``Complex Analysis and Spectral Problems'' 2010/2011 at the Centre de Recerca
Matem\`atica (CRM), Bellaterra, Barcelona. We would also like to thank Anders Olofsson for valuable discussions on the topic.

\bibliographystyle{amsplain}

\begin{thebibliography}{10}

\bibitem{asmar_newberger_saleem2006}
Nakhle Asmar, Florence Newberger, and Saleem Watson, \emph{A multiplier theorem
  for {F}ourier series in several variables}, Colloq. Math. \textbf{106}
  (2006), no.~2, 221--230.

\bibitem{bayart2002paper}
Fr{\' e}d{\' e}ric Bayart, \emph{Hardy spaces of {D}irichlet series and their
  composition operators}, Monatsh. Math. \textbf{136} (2002), no.~3, 203--236.

\bibitem{burkholder1966}
D.~L. Burkholder, \emph{Martingale transforms}, Ann. Math. Statist. \textbf{37}
  (1966), 1494--1504.

\bibitem{deleeuw1965}
Karel de~Leeuw, \emph{On {$L_{p}$} multipliers}, Ann. of Math. (2) \textbf{81}
  (1965), 364--379.

\bibitem{grafakos2008a}
Loukas Grafakos, \emph{Classical {F}ourier analysis}, second ed., Graduate
  Texts in Mathematics, vol. 249, Springer, New York, 2008.

\bibitem{hls1999}
Håkan Hedenmalm, Peter Lindqvist, and Kristian Seip, \emph{Addendum to "{A}
  {H}ilbert space of {D}irichlet series and systems of dilated functions in
  $\text{L}^2(0,1)$"}, Duke Math. J. \textbf{99} (1999), 175--178.

\bibitem{hormander1960}
Lars H{\"o}rmander, \emph{Estimates for translation invariant operators in
  {$L^{p}$}\ spaces}, Acta Math. \textbf{104} (1960), 93--140.

\bibitem{hls1997}
H\aa kan Hedenmalm, Peter Lindqvist, and Kristian Seip, \emph{A {H}ilbert space
  of {D}irichlet series and systems of dilated functions in $\text{L}^2(0,1)$},
  Duke Math. J. \textbf{86} (1997), 1--37.

\bibitem{mihlin1956}
S.~G. Mihlin, \emph{On the multipliers of {F}ourier integrals}, Dokl. Akad.
  Nauk SSSR (N.S.) \textbf{109} (1956), 701--703.

\bibitem{rudin1962}
Walter Rudin, \emph{Fourier {A}nalysis on {G}roups}, Interscience Publishers (a
  division of John Wiley and Sons), New York-London, 1962.

\bibitem{saksman_seip2009}
Eero Saksman and Kristian Seip, \emph{Integral means and boundary limits of
  {D}irichlet series}, Bull. Lond. Math. Soc. \textbf{41} (2009), no.~3,
  411--422.

\end{thebibliography}

\def\cprime{$'$} \def\cprime{$'$} \def\cprime{$'$}
\providecommand{\bysame}{\leavevmode\hbox to3em{\hrulefill}\thinspace}
\providecommand{\MR}{\relax\ifhmode\unskip\space\fi MR }
\providecommand{\MRhref}[2]{%
  \href{http://www.ams.org/mathscinet-getitem?mr=#1}{#2}
}
\providecommand{\href}[2]{#2}

\end{document}